\documentclass[reqno]{amsart}

\usepackage{amsfonts}
\usepackage{amsmath}
\usepackage{amssymb}
\usepackage{latexsym}
\usepackage{amscd}
\usepackage{amsthm}
\usepackage{mathrsfs}
\usepackage{subfigure}
\usepackage{graphicx}
\usepackage[all]{xy}
\usepackage{color}

\newtheorem{thm}{Theorem}[section]

\newtheorem{lem}[thm]{Lemma}
\newtheorem{prob}[thm]{Problem}
\theoremstyle{definition}

\newtheorem{rem}[thm]{Remark}

\newcommand{\M}{\mathcal{M}}
\newcommand{\T}{\mathcal{T}}

\newcommand{\Z}{\mathbb{Z}}

\author[R. Kobayashi]{Ryoma Kobayashi}
\address[R. Kobayashi]{
Department of General Education,\endgraf
National Institute of Technology, Ishikawa College,\endgraf
Tsubata, Ishikawa, 929-0392, Japan
}
\email{kobayashi\_ryoma@ishikawa-nct.ac.jp}

\subjclass[2020]{57M07, 20F05}

\thanks{\textit{Key words and phrases}. level $2$ mapping class group, twist subgroup, non-orientable surface, generator}
\thanks{The author was supported by JSPS KAKENHI Grant Number JP26K06797.
}

\begin{document}

\title[Generating $\T_2(N_g)$]{Generating the twist subgroup of the level $2$ mapping class group of a non-orientable closed surface}

\maketitle

\begin{abstract}
We give two small generating sets for the subgroup of the mapping class group of a non-orientable closed surface which is generated by Dehn twists and acts trivially on the $\Z_2$ coefficient first homology group of the surface.
\end{abstract}

\section{Introduction}

Let $N_g$ be a genus $g\ge1$ non-orientable closed surface, that is, $N_g$ is a connected sum of $g$ real projective planes.
In this paper, we describe $N_g$ as a surface which is obtained by attaching $g$ M\"obius bands to the boundaries of a sphere with $g$ boundaries, as shown in Figure~\ref{non-ori-surf}.
The \textit{mapping class group} $\M(N_g)$ of $N_g$ is the group consisting of isotopy classes of self-diffeomorphisms of $N_g$.
The \textit{level $2$ mapping class group} $\M_2(N_g)$ of $N_g$ is the subgroup of $\M(N_g)$ acting trivially on $H_1(N_g;\Z_2)$.
The \textit{twist subgroup} $\T(N_g)$ of $\M(N_g)$ is the subgroup of $\M(N_g)$ generated by Dehn twists which are defined the next paragraph.
Let $\T_2(N_g)=\M_2(N_g)\cap\T(N_g)$.
We call $\T_2(N_g)$ the twist subgroup of $\M_2(N_g)$ or the level $2$ twist subgroup of $\M(N_g)$.
In this paper, we give two small generating sets for $\T_2(N_g)$.

\begin{figure}[htbp]
\includegraphics{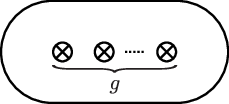}
\caption{A model of a non-orientable closed surface $N_g$.
The marks $\otimes$ are attached M\"obius bands and called \textit{crosscaps}.}\label{non-ori-surf}
\end{figure}

First, we define a \textit{Dehn twist} and a \textit{crosscap slide} which are elements of $\M(N_g)$.
For a simple closed curve $\alpha$ of $N_g$, its regular neighborhood is either an annulus or a M\"obius band.
We call $\alpha$ a \textit{two sided} or a \textit{one sided} simple closed curve, respectively.
For a two sided simple closed curve $\alpha$, the \textit{Dehn twist} $t_\alpha$ about $\alpha$ is the isotopy class of the map acting as shown in Figure~\ref{dehn-slide}~(a).
The direction of the twist is indicated by an arrow written beside $\alpha$ as shown in Figure~\ref{dehn-slide}~(a).
For a one sided simple closed curve $\mu$ of $N_g$ and an oriented two sided simple closed curve $\alpha$ of $N_g$ such that $N_g\setminus\alpha$ is non-orientable when $g\geq3$ and that $\mu$ and $\alpha$ intersect transversely at only one point, the \textit{crosscap slide} $Y_{\mu,\alpha}$ about $\mu$ and $\alpha$ is the isotopy class of the map described by pushing the crosscap which is a regular neighborhood of $\mu$ once along $\alpha$, as shown in Figure~\ref{dehn-slide}~(b).
Note that $\M(N_g)$ can be generated by Dehn twists and crosscap slides (see~\cite{Li1}), and can not be generated by either Dehn twists or crosscap slides (see~\cite{Li2}).

\begin{figure}[htbp]
\subfigure[The Dehn twist $t_\alpha$ about $\alpha$.]{\includegraphics{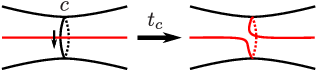}}~\subfigure[The crosscap slide $Y_{\mu,\alpha}$ about $\mu$ and $\alpha$.]{\includegraphics{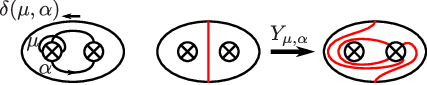}}
\caption{Descriptions of a Dehn twist and a crosscap slide.}\label{dehn-slide}
\end{figure}

Next, we explain about some relations on Dehn twists and crosscap slides.
For details, for instance see \cite{Sz1}.
For a simple closed curve $\alpha$ which bounds a disk or a crosscap, we have that
$$t_\alpha=1.$$
For any Dehn twist $t_\alpha$, crosscap slide $Y_{\mu,\alpha}$ and $f\in\M(N_g)$, we have that
$$ft_\alpha{}f^{-1}=t_{f(\alpha)},~fY_{\mu,\alpha}f^{-1}=Y_{f(\mu),f(\alpha)},$$
where the directions of the twist of $t_{f(\alpha)}$ and the pushing of $Y_{f(\mu),f(\alpha)}$ are induced from $f$ and the directions of the twist of $t_\alpha$ and the pushing of the $Y_{\mu,\alpha}$, respectively.
Let $\mu_1$, $\mu_2$ and $\alpha$ be simple closed curves as shown in Figure~\ref{crosscap-slide-rel}~(a).
Then we have that
$$Y_{\mu_2,\alpha}^{-1}Y_{\mu_1,\alpha}=Y_{\mu_2,\alpha}Y_{\mu_1,\alpha}^{-1}=t_\alpha^2.$$
For two crosscap slides $Y_{\mu,\alpha}$ and $Y_{\mu,\beta}$ such that $\overline{\alpha}\overline{\beta}$ is simple, where $\overline{\alpha}$ and $\overline{\beta}$ are oriented loops of $N_{g-1}$ which is obtained by collapsing a regular neighborhood of $\mu$ and $\overline{\alpha}\overline{\beta}$ is a composition loop of $\overline{\alpha}$ and $\overline{\beta}$ based at the collapsing point, let $\delta_1(\mu,\alpha,\beta)$ and $\delta_2(\mu,\alpha,\beta)$ be simple closed curves determined by $\mu$, $\alpha$ and $\beta$ as shown in Figure~\ref{crosscap-slide-rel}~(b).
Then we have that
$$Y_{\mu,\beta}Y_{\mu,\alpha}=t_{\delta_1(\mu,\alpha,\beta)}t_{\delta_2(\mu,\alpha,\beta)}.$$
In addition, for any crosscap slide $Y_{\mu,\alpha}$, we have that
$$Y_{\mu,\alpha}^2=t_{\delta(\mu,\alpha)},~Y_{\mu,\alpha^{-1}}=Y_{\mu,\alpha}^{-1},$$
where $\delta(\mu,\alpha)$ is a simple closed curve determined by $\mu$ and $\alpha$ as shown in Figure~\ref{dehn-slide}~(b) and $\alpha^{-1}$ is the simple closed curve whose orientation is inverse of $\alpha$.

\begin{figure}[htbp]
\subfigure[$Y_{\mu_2,\alpha}^{-1}Y_{\mu_1,\alpha}=Y_{\mu_2,\alpha}Y_{\mu_1,\alpha}^{-1}=t_\alpha^2$]{\begin{minipage}[t]{0.45\textwidth}\centering\includegraphics{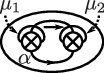}\end{minipage}}
\subfigure[$Y_{\mu,\beta}Y_{\mu,\alpha}=t_{\delta_1(\mu,\alpha,\beta)}t_{\delta_2(\mu,\alpha,\beta)}$]{\includegraphics{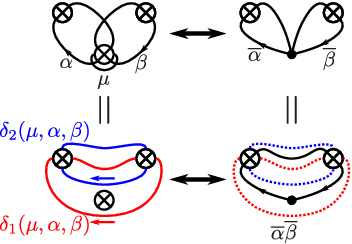}}
\caption{Relations on crosscap slides.}\label{crosscap-slide-rel}
\end{figure}

We now explain about background on our works and main results.
Szepietowski~\cite{Sz1} showed that for $g\ge2$, $\M_2(N_g)$ can be generated by crosscap slides and by involutions.
In addition, Szepietowski~\cite{Sz2} gave a finite generating set for $\M_2(N_g)$ by crosscap slides for $g\ge3$, which is minimal for $g=3$, $4$.
Following these works, Hirose-Sato~\cite{HS} showed that for $g\ge4$, $H_1(\M_2(N_g);\Z)$ is isomorphic to $\Z_2^{(g-1)^2+\binom{g-1}{3}}$, and gave a minimal generating set for $\M_2(N_g)$ which consists of $(g-1)^2$ crosscap slides and $\binom{g-1}{3}$ squares of Dehn twists about non-separating simple closed curves.
It immediately follows that $\M_2(N_g)$ can be generated by $(g-1)^2+\binom{g-1}{3}$ crosscap slides.
After that, Altun\"oz-Monden-Pamuk-Y{\i}ld{\i}z~\cite{AMPY} gave a minimal generating set for $\M_2(N_g)$ by involutions for $g\ge4$.
Omori and the author showed that $H_1(\T_2(N_g);\Z)$ is isomorphic to $\Z_2^{(g-1)^2+\binom{g-1}{3}-1}$ for $g\ge5$ and to $\Z\oplus\Z_2$ for $g=3$, and gave a finite generating set for $\T_2(N_g)$ which consists of products of two crosscap slides as shown in Figure~\ref{crosscap-slide-rel}~(b) and squares of Dehn twists about non-separating simple closed curves for $g\ge3$, whose cardinality is more than the dimension of $H_1(\T_2(N_g);\Z)$.
Note that $\T_2(N_g)$ can be normally generated by one element as shown in Figure~\ref{crosscap-slide-rel}~(b) (see \cite{KO}) and can not be generated by only squares of Dehn twists (see \cite{IK}).
In this paper, we give a finite generating set for $\T_2(N_g)$ consisting of $(g-1)^2+\binom{g-1}{3}+1$ elements which are described as Dehn twists or their products about explicit simple closed curves, for $g\ge4$ (see Theorem~\ref{gen-1}), and a generating set for $\T_2(N_g)$ consisting of $(g-1)^2+\binom{g-1}{3}+1$ involutions for $g\ge9$ (see Theorem~\ref{gen-2}).

Throughout this paper, the product $gf$ of mapping classes $f$ and $g$ means that we apply $f$ first and then $g$.
In calculations, we use basic relations on $\M(N_g)$.
For details, for instance see \cite{Sz1}.

\section{A small simple generating set for $\T_2(N_g)$}\label{main-1}

For $1\le{i_1}<i_2<\cdots<i_k\le{g}$, let $\alpha_{i_1,i_2,\dots,i_k}$ be an oriented simple closed curve as shown in Figure~\ref{alpha}.
The first main result is as follows.

\begin{figure}[htbp]
\includegraphics{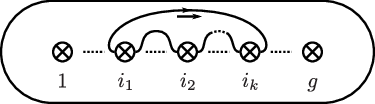}
\caption{An oriented simple closed curve $\alpha_{i_1,i_2,\dots,i_k}$.}\label{alpha}
\end{figure}

\begin{thm}\label{gen-1}
For $g\ge4$, $\T_2(N_g)$ is generated by the following $(g-1)^2+\binom{g-1}{3}+1$ elements.
\begin{itemize}
\item	$t_{\delta_1(\alpha_i,\alpha_{i,j},\alpha_{i,g})}t_{\delta_2(\alpha_i,\alpha_{i,j},\alpha_{i,g})}$ for $1\le{i<j}\le{g-1}$,
\item	$t_{\delta_1(\alpha_i,\alpha_{j,i},\alpha_{i,g})}t_{\delta_2(\alpha_i,\alpha_{j,i},\alpha_{i,g})}$ for $1\le{j<i}\le{g-1}$,
\item	$t_{\alpha_{1,j}}^2$ for $2\le{j}\le{g-1}$,
\item	$t_{\alpha_{1,j,k,l}}^2$ for $2\le{j<k<l}\le{g}$,
\item	$t_{\delta(\alpha_1,\alpha_{1,g})}$ and $t_{\delta(\alpha_{g-1},\alpha_{g-1,g})}$.
\end{itemize}
\end{thm}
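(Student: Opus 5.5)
The plan is to start from the known finite generating set for $\T_2(N_g)$ due to Omori and the author (cited as \cite{KO}). That set consists of products of two crosscap slides $Y_{\mu,\beta}Y_{\mu,\alpha}=t_{\delta_1(\mu,\alpha,\beta)}t_{\delta_2(\mu,\alpha,\beta)}$ and squares of Dehn twists about non-separating curves. We will show that each of its members lies in the subgroup $G$ generated by the $(g-1)^2+\binom{g-1}{3}+1$ listed elements. The comparison point is Hirose--Sato's minimal generating set for $\M_2(N_g)$, made of the $(g-1)^2$ crosscap slides $Y_{\alpha_i,\alpha_{i,j}}$ (suitably indexed, $i\neq j$, with $\alpha_{i,j}$ or $\alpha_{j,i}$ according to order) and the $\binom{g-1}{3}$ squares $t^2_{\alpha_{1,j,k,l}}$. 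Every element of $\T_2(N_g)$ is a word in these generators with even total exponent in the crosscap slides, because $\T(N_g)$ has index $2$ and is the kernel of the determinant-type homomorphism that sends each crosscap slide to the nontrivial element.

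Step 1: Fix the crosscap slide $Y_0=Y_{\alpha_1,\alpha_{1,g}}$, and rewrite each even word as a product of the pairs $Y\,Y_0^{\pm1}$ and $Y_0^{\pm1}Y$ together with conjugates of the $t^2$ generators by $Y_0$. This is Reidemeister--Schreier with transversal $\{1,Y_0\}$. Since $Y_0^2=t_{\delta(\alpha_1,\alpha_{1,g})}$ is in our list, it suffices to show that, for every Hirose--Sato crosscap slide $Y$, the element $YY_0^{-1}$ lies in $G$, and that $G$ is normalized by conjugation by $Y_0$. (Concretely, $Y_0^{-1}Y = Y_0^{-2}(Y_0Y)$ and $Y_0 Y=Y_0(YY_0)Y_0^{-1}$.)

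Step 2: Produce the pairs from the listed generators.
\begin{itemize}
\item For crosscap slides sharing the same crosscap $\alpha_i$, the first two families give $Y_{\alpha_i,\alpha_{i,g}}Y_{\alpha_i,\alpha_{i,j}}$ (resp.\ with $\alpha_{j,i}$) directly, via the relation $Y_{\mu,\beta}Y_{\mu,\alpha}=t_{\delta_1}t_{\delta_2}$.
\item To pass between different crosscaps $\alpha_i$ and $\alpha_1$, we need $Y_{\alpha_i,\alpha_{i,g}}Y_{\alpha_1,\alpha_{1,g}}^{-1}$. I would try to obtain it using the relation $Y_{\mu_2,\alpha}^{-1}Y_{\mu_1,\alpha}=t_\alpha^2$ for two crosscaps on one curve, together with the squares $t^2_{\alpha_{1,j}}$ and the second Dehn twist $t_{\delta(\alpha_{g-1},\alpha_{g-1,g})}=Y_{\alpha_{g-1},\alpha_{g-1,g}}^2$.
\item This should let us chain $\alpha_1\to\alpha_j\to\cdots$: for instance $Y_{\alpha_j,\alpha_{1,j}}Y_{\alpha_1,\alpha_{1,j}}^{-1}=t_{\alpha_{1,j}}^{2}$. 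Combined with the same-crosscap pairs, this gives $Y_{\alpha_j,\ast}Y_{\alpha_1,\ast}^{-1}\in G$ for all $j\le g-1$.
\end{itemize}
The $t^2_{\alpha_{1,j,k,l}}$ are already in the list.

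Step 3: Check closure under conjugation by $Y_0$. The conjugates $Y_0 hY_0^{-1}$ of generators $h$ are Dehn twist products about the image curves; I would express them via the pair-elements already obtained, using $Y_0 h Y_0^{-1}=(Y_0 Y)(Y^{-1}hY)(Y_0Y)^{-1}$-type manipulations or explicit curve computations.

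The main obstacle is the bookkeeping in Step 2. The $t_{\alpha_{1,j}}^2$ relation only links crosscaps $1$ and $j$ along the curve $\alpha_{1,j}$, not along $\alpha_{\ast,g}$, so I must also show that the mixed products $Y_{\alpha_1,\alpha_{1,j}}Y_{\alpha_1,\alpha_{1,g}}^{-1}$ and $Y_{\alpha_j,\alpha_{1,j}}Y_{\alpha_j,\alpha_{j,g}}^{-1}$ lie in $G$. These come from the first two families up to the squares $Y^2$, i.e.\ up to $t_\delta$ for those crosscap slides. That in turn requires showing that every $t_{\delta(\alpha_i,\cdot)}$ lies in $G$, given only the two supplied ones for $i=1$ and $i=g-1$. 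I expect the extra element $t_{\delta(\alpha_{g-1},\alpha_{g-1,g})}$ is exactly what is needed to close this parity gap, which is the reason for the $+1$ in the count.
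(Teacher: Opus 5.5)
\textbf{There is a genuine gap.} Your Steps~1--2 follow the paper's framework: Reidemeister--Schreier applied to the Hirose--Sato generators of $\M_2(N_g)$ with a one-crosscap-slide transversal (the paper takes $\{1,y_{1,2}\}$, you take $\{1,y_{1,g}\}$), then rewriting the Schreier generators through the pair products $y_{i,j}y_{i,g}$, the elements $y_{j,1}y_{1,j}^{-1}=t_{\alpha_{1,j}}^2$, and squares of crosscap slides. The problem is that the step you postpone in your last paragraph is the substance of the proof, and you give no mechanism for it.

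The bookkeeping only closes once squares $y_{i,j}^2=t_{\delta(\alpha_i,\cdot)}$ other than the two supplied ones, $y_{1,g}^2$ and $y_{g-1,g}^2$, are known to lie in $G$. In the paper's normalization these are $y_{1,i}^2$ for $2\le i\le g-1$ and $y_{2,j}^2$ for $3\le j\le g$. The choice $y_{1,2}$ keeps this list short, since $y_{1,2}$ commutes with $y_{i,j}$ for $i,j\ge3$.

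The tools you name do not supply these squares. The relations $Y_{\mu_2,\alpha}^{-1}Y_{\mu_1,\alpha}=t_\alpha^2$, $Y_{\mu,\beta}Y_{\mu,\alpha}=t_{\delta_1}t_{\delta_2}$ and $Y_{\mu,\alpha}^2=t_{\delta(\mu,\alpha)}$ only identify the listed generators with products of crosscap slides, and give $y_{i,j}^2=y_{j,i}^2$. They say nothing about $y_{1,i}^2$ for $i\neq g$.

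Nor can a supplied square simply be conjugated onto another. The two sides of $\delta(\alpha_1,\alpha_{1,g})$ have images $\langle[\alpha_1],[\alpha_g]\rangle$ and $\langle[\alpha_2],\dots,[\alpha_{g-1}]\rangle$ in $H_1(N_g;\Z_2)$. A level-$2$ class must preserve this pair of subspaces, so $y_{1,g}^{\pm2}$ is not conjugate in $\M_2(N_g)$ to $y_{1,i}^2$ for $i\neq g$.

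Your Step~3 is circular as written. The identity $Y_0hY_0^{-1}=(Y_0Y)(Y^{-1}hY)(Y_0Y)^{-1}$ presupposes $Y_0Y\in G$, but Step~1 obtains $Y_0Y$ from normality under $Y_0$. The non-circular route, $Y_0Y=(YY_0^{-1})^{-1}Y^2$, again requires the squares.

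The missing idea is a small set of genuinely new relations in $\M(N_g)$, read off from explicit curve pictures. The paper uses four:
\begin{enumerate}
\item $y_{1,g-1}=(y_{1,g}y_{g-1,g})\,y_{1,g-1}^{-1}\,(y_{1,g}y_{g-1,g})^{-1}$. This makes $y_{1,g-1}^2$ a commutator, expressible in the generators together with \emph{both} supplied squares.
\item For $2\le i\le g-2$, $y_{1,i}^2\cdot y_{1,g-1}y_{1,g}\cdot y_{i,g-1}y_{i,g}=t^{-1}_{y_{1,g}y_{i,g}(\alpha_{g-1,g})}t_{\alpha_{g-1,g}}=(y_{1,g}y_{i,g})\cdot t_{\alpha_{g-1,g}}^{-1}(y_{1,g}y_{i,g})^{-1}t_{\alpha_{g-1,g}}$. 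The second factor is rewritten as conjugates of $y_{i,g-1}^{-1}$ and $y_{1,g-1}^{-1}$ by $y_{g-1,g}$, using the square from item~1.
\item $y_{2,1}^{-1}y_{2,j}y_{2,1}=y_{j,1}y_{2,j}^{-1}y_{j,1}^{-1}$, which gives $y_{2,j}^2$ for $3\le j\le g-1$.
\item The product relation $y_{2,1}^2y_{2,3}^2\cdots y_{2,g}^2=1$, which gives $y_{2,g}^2$.
\end{enumerate}
Without relations of this kind the argument does not go through.

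Your ``parity gap'' explanation of the extra generator is also not right. By the Appendix, for $g\ge5$ both $t_{\delta(\alpha_1,\alpha_{1,g})}$ and $t_{\delta(\alpha_{g-1},\alpha_{g-1,g})}$ lie in the commutator subgroup of $\T_2(N_g)$. They are in the list only as inputs to the relations above, not for homological reasons.
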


For $1\le{i}\le{g}$ and $1\le{j}\le{g}$ with $i\neq{j}$, let
$y_{i,j}=
\left\{
\begin{array}{ll}
Y_{\alpha_i,\alpha_{i,j}}&(i<j),\\
Y_{\alpha_i,\alpha_{j,i}}&(j<i)
\end{array}
\right.$.
Then we have that
\begin{eqnarray*}
&&y_{i,j}y_{i,g}=
\left\{
\begin{array}{ll}
t_{\delta_1(\alpha_i,\alpha_{i,j},\alpha_{i,g})}t_{\delta_2(\alpha_i,\alpha_{i,j},\alpha_{i,g})}&(i<j\le{g-1}),\\
t_{\delta_1(\alpha_i,\alpha_{j,i},\alpha_{i,g})}t_{\delta_2(\alpha_i,\alpha_{j,i},\alpha_{i,g})}&(j<i\le{g-1})
\end{array}
\right.,\\
&&y_{j,i}y_{i,j}^{-1}=y_{j,i}^{-1}y_{i,j}=t_{\alpha_{i,j}}^2~(i<j),\\
&&y_{i,j}^2=t_{\delta(\alpha_i,\alpha_{i,j})}=t_{\delta(\alpha_j,\alpha_{i,j})}=y_{j,i}^2~(i<j).
\end{eqnarray*}
Theorem~\ref{gen-1} can be rephrased as follows.

\begin{thm}
For $g\ge4$, $\T_2(N_g)$ is generated by the following $(g-1)^2+\binom{g-1}{3}+1$ elements.
\begin{itemize}
\item	$y_{i,j}y_{i,g}$ for $1\le{i}\le{g-1}$, $1\le{j}\le{g-1}$ with $i\neq{j}$,
\item	$y_{j,1}y_{1,j}^{-1}$ for $2\le{j}\le{g-1}$,
\item	$t_{\alpha_{1,j,k,l}}^2$ for $2\le{j<k<l}\le{g}$,
\item	$y_{1,g}^2$ and $y_{g-1,g}^2$.
\end{itemize}
\end{thm}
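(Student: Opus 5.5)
The plan is to start from a known generating set of $\T_2(N_g)$ and rewrite each of its members in terms of the proposed elements. The proof will use three inputs. The first is the generating set of Omori and the author for $\T_2(N_g)$: the products $y_{i,j}y_{i,k}$ (products of two crosscap slides at the same crosscap, as in Figure~\ref{crosscap-slide-rel}~(b)) together with squares $t_{\alpha_{i,j}}^2$ and $t_{\alpha_{i,j,k,l}}^2$ of Dehn twists about non-separating curves. The second is the Hirose--Sato minimal generating set of $\M_2(N_g)$, consisting of the $y_{i,j}$ and the $t_{\alpha_{1,j,k,l}}^2$. The third is the relations listed just before the statement.

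Let $G$ denote the subgroup generated by the listed elements. Since each listed element lies in $\T_2(N_g)$, it suffices to show that every element of the Omori--Kobayashi generating set lies in $G$. I will carry this out in the following order.

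\textbf{Step 1.} I will get all $y_{i,j}y_{i,k}$ for $i\le g-1$ from
$y_{i,j}y_{i,k}=(y_{i,j}y_{i,g})(y_{i,k}y_{i,g})^{-1}\cdot (y_{i,k}y_{i,g}y_{i,g}^{-1}y_{i,k}^{-1})\cdots$.
More concretely, I will write
$y_{i,j}y_{i,k}=(y_{i,j}y_{i,g})\,y_{i,g}^{-2}\,(y_{i,g}y_{i,k})$.
This requires $y_{i,g}^2$ and the reversed products $y_{i,g}y_{i,k}=y_{i,g}(y_{i,k}y_{i,g})y_{i,g}^{-1}$. So the key intermediate goal is that $G$ contains every $y_{i,g}^2=t_{\delta(\alpha_i,\alpha_{i,g})}$ and is normalized by conjugation with $y_{i,g}$ (modulo $G$).

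\textbf{Step 2.} I will obtain the remaining squares $y_{i,g}^2$ from $y_{1,g}^2$ and $y_{g-1,g}^2$. The relation $y_{j,1}y_{1,j}^{-1}=t_{\alpha_{1,j}}^2$ lets me move between crosscaps. Conjugating $y_{1,g}^2$ by suitable products in $G$ (for instance $y_{i,1}y_{i,g}$ composed with $t^2_{\alpha_{1,i}}$) maps $\delta(\alpha_1,\alpha_{1,g})$ to $\delta(\alpha_i,\alpha_{i,g})$ up to curves whose twists are already in $G$. The role of the second element $y_{g-1,g}^2$ is to supply the missing element detected in $H_1(\T_2(N_g);\Z)$. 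The listed set has one more element than the rank $(g-1)^2+\binom{g-1}{3}-1$, and I expect both $\delta$-twists to be needed to cover the classes involving crosscap $g$.

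\textbf{Step 3.} I will get all $t_{\alpha_{i,j}}^2$ for $i,j\le g-1$ via $t_{\alpha_{i,j}}^2=y_{j,i}y_{i,j}^{-1}=(y_{j,i}y_{j,g})(y_{j,g}^{-2})(y_{j,g}y_{i,j}^{-1})$. Here a product of slides at different crosscaps is converted using conjugation relations such as $y_{1,i}t_{\alpha_{1,j}}^2y_{1,i}^{-1}=t^2_{y_{1,i}(\alpha_{1,j})}$ and the identification of $y_{1,i}(\alpha_{1,j})$ with some $\alpha$-curve up to orientation. The twists $t_{\alpha_{i,g}}^2$ follow similarly from $y_{g,i}y_{i,g}^{-1}$, using $y_{i,g}^2=y_{g,i}^2$.

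\textbf{Step 4.} I will get general $t_{\alpha_{i,j,k,l}}^2$ from the $t_{\alpha_{1,j,k,l}}^2$ by conjugating with the elements $y_{1,i}y_{1,g}$ and $t^2_{\alpha_{1,i}}$. I will use the explicit action of crosscap slides on the curves $\alpha_{1,j,k,l}$, which sends them to $\alpha_{i,j,k,l}$-type curves modulo previously obtained twists.

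I expect the main obstacle to be Steps 2--3. They require explicit curve computations showing that conjugates of $t_{\delta(\alpha_1,\alpha_{1,g})}$ by the available generators really produce every $t_{\delta(\alpha_i,\alpha_{i,g})}$, and that no generator with crosscap $g$ as its base is lost. Here I would first try to use the relation $y_{i,j}^2=y_{j,i}^2$ together with the lantern-type relation $Y_{\mu,\beta}Y_{\mu,\alpha}=t_{\delta_1}t_{\delta_2}$, reducing to an inductive chain $i\to i+1$ along $1,\dots,g-1$. The element $y_{g-1,g}^2$ would be used to close the chain at the crosscap-$g$ end.
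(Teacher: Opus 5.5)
There is a genuine gap, and it is in Step~2, which carries the whole argument. The same flaw breaks Steps~3 and~4. The only mechanism you give for passing from crosscap $1$ to crosscap $i$ is conjugation by elements of $G$, and this cannot work. Every element of $G$ lies in $\M_2(N_g)$, so it acts trivially on $H_1(N_g;\Z_2)$, which has basis $[\alpha_1],\dots,[\alpha_g]$. The one-holed Klein bottle cut off by $\delta(\alpha_1,\alpha_{1,g})$ has image $\langle[\alpha_1],[\alpha_g]\rangle$ in $H_1(N_g;\Z_2)$. For $2\le i\le g-1$, neither complementary piece of $\delta(\alpha_i,\alpha_{i,g})$ has this image. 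Hence no $f\in G$ carries $\delta(\alpha_1,\alpha_{1,g})$ to $\delta(\alpha_i,\alpha_{i,g})$.

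The same argument applies elsewhere. Since $[\alpha_{1,j,k,l}]\neq[\alpha_{i,j,k,l}]$ for $i\ge2$, Step~4 fails. In Step~3, $y_{1,i}(\alpha_{1,j})$ still represents $[\alpha_1]+[\alpha_j]$, so it is never another $\alpha_{p,q}$. The phrase ``up to curves whose twists are already in $G$'' would have to be replaced by actual relations, and you supply none.

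Step~3 is also circular as written. Indeed $y_{j,g}y_{i,j}^{-1}=y_{j,g}y_{j,i}^{-1}\cdot t_{\alpha_{i,j}}^2$, so it is exactly as hard as the $t_{\alpha_{i,j}}^2$ you want. Your homology heuristic is also wrong on two counts. The list exceeds $(g-1)^2+\binom{g-1}{3}-1$ by two, not one. Moreover, by the computation in the appendix, $y_{1,g}^2$ and $y_{g-1,g}^2$ lie in the commutator subgroup of $\T_2(N_g)$ for $g\ge5$, so they account for no homology classes at all.

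What is missing is a set of explicit relations, not conjugations, that manufacture new squares from the two listed ones. The paper uses four.
\begin{itemize}
\item $y_{1,g-1}=(y_{1,g}y_{g-1,g})\,y_{1,g-1}^{-1}\,(y_{1,g}y_{g-1,g})^{-1}$ writes $y_{1,g-1}^2$ as a word in the generators. This is where both $y_{1,g}^2$ and $y_{g-1,g}^2$ are actually consumed.
\item $y_{1,i}^2\,y_{1,g-1}y_{1,g}\,y_{i,g-1}y_{i,g}=t^{-1}_{y_{1,g}y_{i,g}(\alpha_{g-1,g})}t_{\alpha_{g-1,g}}$ is a commutator whose two halves are then written in the generators. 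This gives $y_{1,i}^2$ for $2\le i\le g-2$.
\item $y_{2,1}^{-1}y_{2,j}y_{2,1}=y_{j,1}y_{2,j}^{-1}y_{j,1}^{-1}$ gives $y_{2,j}^2$ for $3\le j\le g-1$.
\item $y_{2,1}^2y_{2,3}^2\cdots y_{2,g}^2=1$ gives $y_{2,g}^2$.
\end{itemize}
Moving between crosscaps is always routed through crosscap $1$ using the generators $y_{i,1}y_{1,i}^{-1}=t_{\alpha_{1,i}}^2$. For example, $y_{1,2}y_{i,g}^{-1}=(y_{i,1}y_{i,g})^{-1}\cdot y_{i,1}y_{1,i}^{-1}\cdot y_{1,i}y_{1,g}\cdot(y_{1,2}y_{1,g})^{-1}\cdot y_{1,2}^2$.

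The paper also does not start from the Omori--Kobayashi set. Your plan depends on the exact shape of that set but never pins it down, for instance whether pushing maps based at crosscap $g$ occur. Instead, the paper applies Reidemeister--Schreier to the Hirose--Sato generators with transversal $\{1,y_{1,2}\}$. The targets are then only $y_{1,2}y_{i,j}^{\pm1}$ and $t^2_{\alpha_{1,j,k,l}}$ together with their $y_{1,2}$-conjugates. As a result, only the squares $y_{1,i}^2$ and $y_{2,j}^2$ are needed, and no $t^2_{\alpha_{i,j,k,l}}$ with $i\neq1$ ever has to be produced.
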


Let $G_1$ be the subgroup of $\T_2(N_g)$ generated by the elements in Theorem~\ref{gen-1}.
Let $G_2$ be the subgroup of $\T_2(N_g)$ generated by the elements in Theorem~\ref{gen-1} and $y_{1,i}$ for $2\le{i}\le{g-1}$ and $y_{2,j}$ for $3\le{j}\le{g}$.
Let $G_3$ be the subgroup of $\T_2(N_g)$ generated by the followings.
\begin{itemize}
\item	$y_{12}y_{i,j}^{\pm1}$ for $1\le{i}\le{g-1}$, $1\le{j}\le{g}$ with $i\neq{j}$,
\item	$t_{\alpha_{1,j,k,l}}^2$ for $2\le{j<k<l}\le{g}$,
\item	$y_{1,2}t_{\alpha_{1,j,k,l}}^2y_{1,2}^{-1}$ for $2\le{j<k<l}\le{g}$,
\end{itemize}
We show $\T_2(N_g)=G_3$, $G_3\subset{G_2}$ and $G_2\subset{G_1}$.
Note that $G_i\subset\T_2(N_g)$ folds for any $i=1$, $2$, $3$.

First, we show $\T_2(N_g)=G_3$.
Hirose-Sato~\cite{HS} proved that for $g\ge4$, $\M_2(N_g)$ is generated by the following $(g-1)^2+\binom{g-1}{3}$ elements.
\begin{itemize}
\item	$y_{i,j}$ for $1\le{i}\le{g-1}$, $1\le{j}\le{g}$ with $i\neq{j}$,
\item	$t_{\alpha_{1,j,k,l}}^2$ for $2\le{j<k<l}\le{g}$.
\end{itemize}
It is known that $\T(N_g)$ (resp. $\T_2(N_g)$) is an index two subgroup of $\M(N_g)$ (resp. $\M_2(N_g)$) (see \cite{Li2} (resp. \cite{KO})).
The quotient $\M(N_g)/\T(N_g)$ and $\M_2(N_g)/\T_2(N_g)$ can be generated by $y_{1,2}$.
We can take a Schreier transversal $U=\{1,y_{1,2}\}$ for $\T_2(N_g)$ in $\M_2(N_g)$.
For $f\in\M_2(N_g)$, we denote by $\overline{f}$ the element of $U$ corresponding to the image of $f$ in $\M_2(N_g)/\T_2(N_g)$.
Let $G$ be the generating set for $\M_2(N_g)$ above.
Then, by the Reidemeister-Schreier method,  $\T_2(N_g)$ is generated by
$$\{uf^{\pm1}\overline{uf^{\pm1}}^{-1}\mid{}u\in{U},f\in{G},uf^{\pm1}\neq\overline{uf^{\pm1}}\}.$$
We see
\begin{eqnarray*}
1y_{i,j}^{\pm1}\overline{1y_{i,j}^{\pm1}}^{-1}&=&y_{i,j}^{\pm1}y_{1,2}^{-1}=(y_{1,2}y_{i,j}^{\mp1})^{-1},\\
y_{1,2}y_{i,j}^{\pm1}\overline{y_{1,2}y_{i,j}^{\pm1}}^{-1}&=&y_{1,2}y_{i,j}^{\pm1},\\
1(t_{\alpha_{1,j,k,l}}^2)^{\pm1}\overline{1(t_{\alpha_{1,j,k,l}}^2)^{\pm1}}^{-1}&=&(t_{\alpha_{1,j,k,l}}^2)^{\pm1},\\
y_{1,2}(t_{\alpha_{1,j,k,l}}^2)^{\pm1}\overline{y_{1,2}(t_{\alpha_{1,j,k,l}}^2)^{\pm1}}^{-1}&=&y_{1,2}(t_{\alpha_{1,j,k,l}}^2)^{\pm1}y_{1,2}^{-1}=(y_{1,2}t_{\alpha_{1,j,k,l}}^2y_{1,2}^{-1})^{-1}.
\end{eqnarray*}
Therefore $\M_2(N_g)=G_3$ folds.

Next, we show $G_3\subset{G_2}$.
For $3\le{i}\le{g-1}$, we see
\begin{eqnarray*}
y_{1,2}y_{i,g}^{-1}&=&y_{i,g}^{-1}y_{1,2}=(y_{i,1}y_{i,g})^{-1}\cdot{}y_{i,1}y_{1,i}^{-1}\cdot{}y_{1,i}y_{1,g}\cdot(y_{1,2}y_{1,g})^{-1}\cdot{}y_{1,2}^2\in{G_2},\\
y_{1,2}y_{i,g}&=&y_{i,g}y_{1,2}=(y_{1,2}y_{i,g}^{-1})^{-1}\cdot{}y_{1,2}^2\in{G_2}.
\end{eqnarray*}
For $3\le{i,j}\le{g-1}$, we see
\begin{eqnarray*}
y_{1,2}y_{i,j}&=&y_{i,j}y_{1,2}=y_{i,j}y_{i,g}\cdot{}y_{1,2}y_{i,g}^{-1}=\in{G_2},\\
y_{1,2}y_{i,j}^{-1}&=&y_{i,j}^{-1}y_{1,2}=(y_{1,2}y_{i,j})^{-1}\cdot{}y_{1,2}^2\in{G_2}.
\end{eqnarray*}
For $3\le{j}\le{g}$, we see
\begin{eqnarray*}
y_{1,2}y_{1,j}^{-1}&=&y_{1,2}y_{1,g}\cdot(y_{1,j}y_{1,g})^{-1}\in{G_2},\\
y_{1,2}y_{1,j}&=&y_{1,2}y_{1,j}^{-1}\cdot{}y_{1,j}^2\in{G_2},\\
y_{1,2}y_{2,j}^{-1}&=&y_{1,2}y_{2,1}^{-1}\cdot{}y_{2,1}y_{2,g}\cdot(y_{2,j}y_{2,g})^{-1}\in{G_2},\\
y_{1,2}y_{2,j}&=&y_{1,2}y_{2,j}^{-1}\cdot{}y_{2,j}^2\in{G_2}.
\end{eqnarray*}
For $3\le{i}\le{g-1}$, we see
\begin{eqnarray*}
y_{1,2}y_{i,1}^{-1}&=&y_{1,2}y_{1,g}\cdot(y_{1,i}y_{1,g})^{-1}\cdot(y_{i,1}y_{1,i}^{-1})^{-1}\in{G_2},\\
y_{1,2}y_{i,1}&=&y_{1,2}y_{i,1}^{-1}\cdot{}y_{i,1}^2=y_{1,2}y_{i,1}^{-1}\cdot{}y_{1,i}^2\in{G_2},\\
y_{1,2}y_{i,2}^{-1}&=&y_{1,2}y_{i,1}^{-1}\cdot{}y_{i,1}y_{i,g}\cdot(y_{i,2}y_{i,g})^{-1}\in{G_2},\\
y_{1,2}y_{i,2}&=&y_{1,2}y_{i,2}^{-1}\cdot{}y_{i,2}^2=y_{1,2}y_{i,2}^{-1}\cdot{}y_{2,i}^2\in{G_2}.
\end{eqnarray*}
Finally, we see
\begin{eqnarray*}
y_{1,2}y_{2,1}^{-1}&=&(y_{2,1}y_{1,2}^{-1})^{-1}\in{G_2},\\
y_{1,2}y_{2,1}&=&(y_{2,1}y_{1,2}^{-1})^{-1}\cdot{}y_{2,1}^2=(y_{2,1}y_{1,2}^{-1})^{-1}\cdot{}y_{1,2}^2\in{G_2}.
\end{eqnarray*}
Therefore $G_3\subset{G_2}$.

Finally, we show $G_2\subset{G_1}$.
We see
\begin{eqnarray*}
y_{1,g-1}^2&=&y_{1,g-1}\cdot{}y_{1,g}y_{g-1,g}\cdot{}y_{1,g-1}^{-1}\cdot{}y_{g-1,g}^{-1}y_{1,g}^{-1}\\
&=&y_{1,g-1}y_{1,g}\cdot{}y_{g-1,g}^2\cdot(y_{g-1,1}y_{g-1,g})^{-1}\cdot{}y_{g-1,1}y_{1,g-1}^{-1}\cdot(y_{g-1,1}y_{g-1,g})^{-1}\\
&&\cdot{}y_{g-1,1}y_{1,g-1}^{-1}\cdot{}y_{1,g-1}y_{1,g}\cdot(y_{1,g}^2)^{-1}\in{G_1}
\end{eqnarray*}
(see Figure~\ref{y}~(a)).
For $2\le{i}\le{g-2}$, we see
$$y_{1,i}^2\cdot{}y_{1,g-1}y_{1,g}\cdot{}y_{i,g-1}y_{i,g}=t_{y_{1,g}y_{i,g}(\alpha_{g-1,g})}^{-1}t_{\alpha_{g-1,g}}=y_{1,g}y_{i,g}t_{\alpha_{g-1,g}}^{-1}y_{i,g}^{-1}y_{1,g}^{-1}t_{\alpha_{g-1,g}}$$
(see Figure~\ref{y}~(b)).
In addition, we see
\begin{eqnarray*}
y_{1,g}y_{i,g}&=&y_{1,g}^2\cdot(y_{1,i}y_{1,g})^{-1}\cdot(y_{i,1}y_{1,i}^{-1})^{-1}\cdot{}y_{i,1}y_{i,g}\in{G_1},\\
t_{\alpha_{g-1,g}}^{-1}y_{i,g}^{-1}y_{1,g}^{-1}t_{\alpha_{g-1,g}}
&=&y_{g-1,g}y_{i,g-1}^{-1}y_{g-1,g}^{-1}\cdot{}y_{g-1,g}y_{1,g-1}^{-1}y_{g-1,g}^{-1}\\
&=&y_{g-1,g}^2\cdot(y_{g-1,1}y_{g-1,g})^{-1}\cdot{}y_{g-1,1}y_{1,g-1}^{-1}\cdot{}y_{1,g-1}y_{1,g}\cdot(y_{1,i}y_{1,g})^{-1}\\
&&\cdot(y_{i,1}y_{1,i}^{-1})^{-1}\cdot{}y_{i,1}y_{i,g}\cdot(y_{i,g-1}y_{i,g})^{-1}\\
&&\cdot(y_{1,g-1}^2)^{-1}\cdot(y_{g-1,1}y_{1,g-1}^{-1})^{-1}\cdot{}y_{g-1,1}y_{g-1,g}\cdot(y_{g-1,g}^2)^{-1}\in{G_1}
\end{eqnarray*}
(see Figure~\ref{y}~(c)).
Hence $y_{1,i}^2$ is in $G_1$ for $2\le{i}\le{g-2}$.
For $3\le{j}\le{g-1}$, we see
\begin{eqnarray*}
y_{2,j}^2
&=&y_{2,j}y_{2,g}\cdot(y_{2,1}y_{2,g})^{-1}\cdot{}y_{2,1}^2\cdot{}y_{2,1}^{-1}y_{2,j}y_{2,1}\cdot{}y_{2,1}^{-1}\\
&=&y_{2,j}y_{2,g}\cdot(y_{2,1}y_{2,g})^{-1}\cdot{}y_{1,2}^2\cdot{}y_{j,1}y_{2,j}^{-1}y_{j,1}^{-1}\cdot{}y_{2,1}^{-1}\\
&=&y_{2,j}y_{2,g}\cdot(y_{2,1}y_{2,g})^{-1}\cdot{}y_{1,2}^2\cdot{}y_{j,1}y_{2,j}^{-1}\cdot{}y_{j,1}^{-1}y_{2,1}^{-1}
\end{eqnarray*}
(see Figure~\ref{y}~(d)).
In addition, we see
\begin{eqnarray*}
y_{j,1}y_{2,j}^{-1}&=&y_{j,1}y_{1,j}^{-1}\cdot{}y_{1,j}y_{1,g}\cdot(y_{1,2}y_{1,g})^{-1}\cdot(y_{2,1}y_{1,2}^{-1})^{-1}\cdot{}y_{2,1}y_{2,g}\cdot(y_{2,j}y_{2,g})^{-1}\in{G_1},\\
y_{j,1}^{-1}y_{2,1}^{-1}&=&(y_{j,1}^2)^{-1}\cdot{}y_{j,1}y_{2,1}^{-1}=(y_{1,j}^2)^{-1}\cdot{}y_{j,1}y_{1,j}^{-1}\cdot{}y_{1,j}y_{1,g}\cdot(y_{1,2}y_{1,g})^{-1}\cdot(y_{2,1}y_{1,2}^{-1})^{-1}\in{G_1}.
\end{eqnarray*}
Hence $y_{2,j}^2$ is in $G_1$ for $3\le{j}\le{g-1}$.
Finally, we see
$$y_{2,g}^2=(y_{2,1}^2y_{2,3}^2\cdots{}y_{2,g-1}^2)^{-1}=(y_{1,2}^2\cdot{}y_{2,3}^2\cdots{}y_{2,g-1}^2)^{-1}\in{G_1}$$
(see Figure~\ref{y}~(e)).
Therefore $G_2\subset{G_1}$.

\begin{figure}[htbp]
\subfigure[$y_{1,g-1}=y_{1,g}y_{g-1,g}\cdot{}y_{1,g-1}^{-1}\cdot{}y_{g-1,g}^{-1}y_{1,g}^{-1}$]{\includegraphics{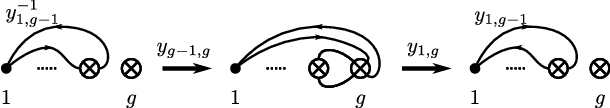}}
\subfigure[$y_{1,i}^2\cdot{}y_{1,g-1}y_{1,g}\cdot{}y_{i,g-1}y_{i,g}=t_{y_{1,g}y_{i,g}(\alpha_{g-1,g})}^{-1}t_{\alpha_{g-1,g}}$]{\includegraphics{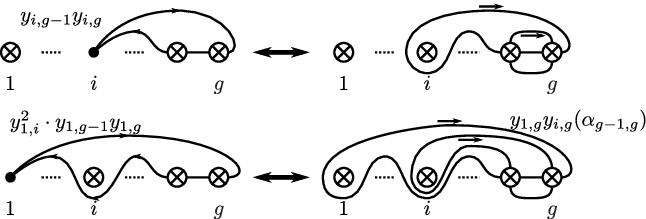}}
\subfigure[$t_{\alpha_{g-1,g}}^{-1}y_{i,g}^{-1}y_{1,g}^{-1}t_{\alpha_{g-1,g}}=y_{g-1,g}y_{i,g-1}^{-1}y_{g-1,g}^{-1}\cdot{}y_{g-1,g}y_{1,g-1}^{-1}y_{g-1,g}^{-1}$]{\includegraphics{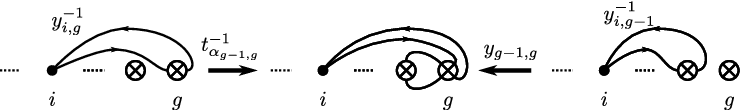}}
\subfigure[$y_{2,1}^{-1}y_{2,j}y_{2,1}=y_{j,1}y_{2,j}^{-1}y_{j,1}^{-1}$]{\includegraphics{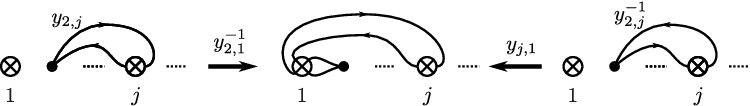}}
\subfigure[$y_{2,g}^2=(y_{2,1}^2y_{2,3}^2\cdots{}y_{2,g-1}^2)^{-1}$]{\includegraphics{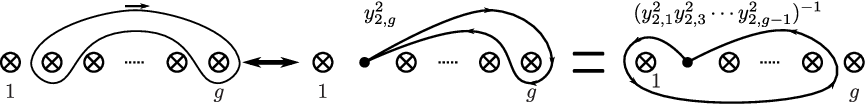}}
\caption{}\label{y}
\end{figure}

Thus, we complete the proof of Theorem~\ref{gen-1}.

\section{A small involution generating set for $\T_2(N_g)$}\label{main-2}

For $1\le{i<j}\le{g}$, let $\beta_{i,j}$ be an oriented simple closed curve as shown in Figure~\ref{beta}.
For $1\le{i<j<k<l}\le{g}$, let $\beta_{i,j,k,l}$ be a simple closed curve as shown in Figure~\ref{beta}.
Let $R$ be the reflection as shown in Figure~\ref{R}.
For $1\le{i}\le{g-1}$, $1\le{j}\le{g}$ with $i\neq{j}$, we put
$$m=
\left\{
\begin{array}{ll}
1&(\min\{i,j\}\ge3~\textrm{or}~(i,j)=(1,2)),\\
\min\{i,j\}+1&(\min\{i,j\}<3,~|i-j|\ge3),\\
\max\{i,j\}+1&(\min\{i,j\}<3,~|i-j|<3,~\textrm{with}~(i,j)\neq(1,2))
\end{array}
\right..$$
For $2\le{j<k<l}\le{g}$, we put
$$n=
\left\{
\begin{array}{ll}
2&(j\ge4),\\
j+1&(j<4,~k-j\ge3),\\
k+1&(j<4,~k-j<3,~l-k\ge3),\\
l+1&(j<4,~k-j<3,~l-k<3)
\end{array}
\right..$$
The second main result is as follows.

\begin{figure}[htbp]
\includegraphics{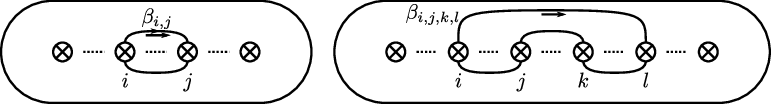}
\caption{An oriented simple closed curve $\beta_{i,j}$ and a simple closed curve $\beta_{i,j,k,l}$.}\label{beta}
\end{figure}

\begin{figure}[htbp]
\includegraphics{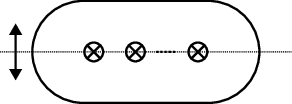}
\caption{The reflection $R$.}\label{R}
\end{figure}

\begin{thm}\label{gen-2}
For $g\ge9$ odd, $\T_2(N_g)$ is generated by the following $(g-1)^2+\binom{g-1}{3}+1$ involutions.
\begin{itemize}
\item	$RY_{\alpha_i,\beta_{i,j}}Y_{\alpha_m,\beta_{m,m+1}}$ for $1\le{i<j}\le{g}$,
\item	$RY_{\alpha_j,\beta_{i,j}}Y_{\alpha_m,\beta_{m,m+1}}$ for $1\le{i<j}\le{g-1}$,
\item	$Rt_{\beta_{1,j,k,l}}t_{R(\beta_{1,j,k,l})}^{-1}$ for $2\le{j<k<l}\le{g}$ and
\item	$R$.
\end{itemize}
For $g\ge10$ even, $\T_2(N_g)$ is generated by the following $(g-1)^2+\binom{g-1}{3}+1$ involutions.
\begin{itemize}
\item	$RY_{\alpha_i,\beta_{i,j}}$ for $1\le{i<j}\le{g}$,
\item	$RY_{\alpha_j,\beta_{i,j}}$ for $1\le{i<j}\le{g-1}$,
\item	$RY_{\alpha_n,\beta_{n,n+1}}t_{\beta_{1,j,k,l}}t_{R(\beta_{1,j,k,l})}^{-1}$ for $2\le{j<k<l}\le{g}$ and
\item	$RY_{\alpha_1,\beta_{1,2}}^{-1}$.
\end{itemize}
\end{thm}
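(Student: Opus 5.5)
We follow the scheme of Section~\ref{main-1}. Let $H$ be the subgroup generated by the listed elements. There are three things to show: each listed element is an involution in $\T_2(N_g)$, the set has the stated cardinality, and $H$ contains the generating set of Theorem~\ref{gen-1}.

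\emph{Step 1: involutions in $\T_2(N_g)$.} The reflection $R$ (Figure~\ref{R}) is an involution, and it acts trivially on $H_1(N_g;\Z_2)$ because it fixes every crosscap class. We must check from the model that $R\in\T(N_g)$; this is where the parity of $g$ enters. For $g$ odd, $R$ should lie in the twist subgroup. For $g$ even it should not, and $RY_{\alpha_1,\beta_{1,2}}^{-1}$ is used in its place.

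For the other elements we use the following observation: if $R$ is an involution and $RxR=x^{-1}$, then $Rx$ is an involution. The curves $\beta_{i,j}$ are drawn so that $R(\beta_{i,j})=\beta_{i,j}^{-1}$ and $R(\alpha_i)=\alpha_i$, up to isotopy. Conjugation then gives $RY_{\alpha_i,\beta_{i,j}}R=Y_{\alpha_i,\beta_{i,j}}^{-1}$. Similarly $R\,t_{\beta_{1,j,k,l}}t_{R(\beta_{1,j,k,l})}^{-1}R=t_{R(\beta_{1,j,k,l})}^{-1}t_{\beta_{1,j,k,l}}$, since $R$ reverses the twist direction; this equals the inverse because the two curves are disjoint.

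For products such as $RY_{\alpha_i,\beta_{i,j}}Y_{\alpha_m,\beta_{m,m+1}}$, the index $m$ is chosen so that the curves of the second crosscap slide are disjoint from those of the first; this is what the case distinctions with ``$\ge3$'' guarantee. The two slides then commute, and both are inverted by $R$, so their product is inverted by $R$. The same argument applies to $n$.

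Membership in the level-2 twist subgroup is a parity count. Crosscap slides lie in $\M_2\setminus\T_2$, Dehn-twist products lie in $\T$, and the level condition holds because all factors act trivially mod $2$. Counting gives $\binom g2+\binom{g-1}2+\binom{g-1}3+1=(g-1)^2+\binom{g-1}3+1$.

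\emph{Step 2: generation.} First produce products of pairs of crosscap slides in $H$. For $g$ odd, $R\in H$, so $H$ contains $X_{i,j}:=Y_{\alpha_i,\beta_{i,j}}Y_{\alpha_m,\beta_{m,m+1}}$, and in particular $Y_{\alpha_1,\beta_{1,2}}^2$ when $(i,j)=(1,2)$. The curves $\beta_{i,j}$ should be related to $\alpha_{i,j}$ by an explicit diffeomorphism, or be equal to them up to the conventions for $y_{i,j}$. Taking quotients $X_{i,j}X_{i',j'}^{-1}$ then yields elements of the form $y_{i,j}y_{i',j'}^{\pm1}$.

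Next use the relations already recorded: $y_{i,j}y_{i,g}$, $y_{j,i}y_{1,j}^{-1}$ and squares. The goal is to obtain all of $y_{i,j}y_{i,g}$, $y_{j,1}y_{1,j}^{-1}$, $y_{1,g}^2$ and $y_{g-1,g}^2$.

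For the twists $t^2_{\alpha_{1,j,k,l}}$, combine $Rt_{\beta_{1,j,k,l}}t^{-1}_{R\beta_{1,j,k,l}}$ with $R$ to obtain $t_\beta t_{R\beta}^{-1}$. Then conjugate by elements already in $H$ (products of crosscap slides) to turn this into $t_\beta^2$ modulo elements already obtained. The fact that $t_{\alpha}^2\in\M_2$ links these to the required generators.

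The even case is identical, except that one multiplies consecutive generators, $RY\cdot RY'=Y^{-1}Y'$, using that $R$ inverts each $Y$. This gives pairwise products directly, with $RY_{\alpha_1,\beta_{1,2}}^{-1}$ serving as the reference element.

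\emph{Main obstacle.} The hard part is the bookkeeping in Step 2: expressing each generator of Theorem~\ref{gen-1} as a word in the involutions. This requires explicit lantern-type and conjugation identities relating $\beta$-curves to $\alpha$-curves, checked against figures. The bound $g\ge9$ is presumably exactly what is needed for the auxiliary index $m$ or $n$ to exist with the required disjointness.
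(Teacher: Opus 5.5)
Reducing to Theorem~\ref{gen-1} is a legitimate alternative to the paper's route. The paper instead proves that $\{z_{i,j}\}\cup\{t_{\beta_{1,j,k,l}}t_{R(\beta_{1,j,k,l})}^{-1}\}$ generates $\M_2(N_g)$, then runs Reidemeister--Schreier with transversal $\{1,Rz_{1,2}\}$ or $\{1,R\}$. However, your Step~2 leaves out the inputs that carry the proof.

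\textbf{Translating between $z_{i,j}$ and $y_{i,j}$.} You have no way to pass from the $\beta$-slides $z_{i,j}$ to the slides $y_{i,j}$ of Theorem~\ref{gen-1}. The $\beta_{i,j}$ are not the $\alpha_{i,j}$. A diffeomorphism carrying one family to the other is not known to lie in $H$, so it gives nothing. What is needed is the theorem of Altun\"oz--Monden--Pamuk--Y{\i}ld{\i}z that $\langle y_{i,j}\rangle=\langle z_{i,j}\rangle$, together with $R=z_{1,g}z_{2,g}\cdots z_{g-1,g}$. The latter also settles your deferred question of when $R\in\T(N_g)$.

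\textbf{The twists $t^2_{\alpha_{1,j,k,l}}$.} ``Conjugate to get $t_\beta^2$ modulo elements already obtained'' is not an argument. The paper conjugates $t_{\beta_{1,j,k,l}}t_{R(\beta_{1,j,k,l})}^{-1}$ by an explicit $f_{1,j,k,l}$ with $f_{1,j,k,l}(\beta_{1,j,k,l})=\alpha_{1,j,k,l}$. It then uses the $3$-chain relation $t_{\alpha_{1,j,k,l}}t_{f_{1,j,k,l}(R(\beta_{1,j,k,l}))}=(t_{\alpha_{1,j}}t_{\alpha_{j,k}}t_{\alpha_{k,l}})^4$. Rewriting the right side as a product of conjugates of $t_{\alpha_{a,b}}^2=y_{b,a}y_{a,b}^{-1}$ puts the correction term in $\langle y_{i,j}\rangle$. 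Since $H\subset\T_2(N_g)$, you must also handle $f_{1,j,k,l}\notin\T_2(N_g)$, which does occur. For instance, use that $z_{n,n+1}$ commutes with $t_{\beta_{1,j,k,l}}t_{R(\beta_{1,j,k,l})}^{-1}$; this is where $g\ge9$ comes from.

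\textbf{Pairwise products in the even case.} Your claim that products of consecutive generators ``give pairwise products directly'' is false. The crosscap-slide involutions only give $z_a^{-1}z_b$, $z_{1,2}z_b$ and their inverses. Consider the model group $F_N\rtimes\langle R\rangle$, where $R$ inverts each free generator $x_a$ (the $x_a$ stand for the $z_{i,j}$, with $x_1$ for $z_{1,2}$). There the involutions $Rx_a$ and $Rx_1^{-1}$ generate a subgroup meeting $F_N$ in $\langle x_1^{-1}x_b,\,x_1^2\rangle$. That subgroup has rank at most $N$, whereas the even-length subgroup of $F_N$ has rank $2N-1$. So no manipulation using only $R^2=1$ and $RzR=z^{-1}$ can yield $Rz_{i,j}^{-1}$ or $z_{i,j}z_{1,2}$ for $(i,j)\neq(1,2)$. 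You need genuine commutation relations in $\M(N_g)$, as in $Rz_{i,j}^{-1}=Rz_c^{-1}\cdot Rz_{i,j}\cdot Rz_c$ with $c\in\{(1,2),(3,4),(5,6)\}$ chosen so that $z_c$ commutes with $z_{i,j}$.

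\textbf{The odd case.} The quotients $X_aX_b^{-1}$ are four-letter words, not pairwise products. What works is $R$-conjugation, $RX_{i,j}R=z_{i,j}^{-1}z_{m,m+1}^{-1}$, combined with the way $m$ links every index back to $(1,2)$.

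\textbf{Two slips in Step~1.} The two twists commute, so your formula $Rt_\beta t_{R\beta}^{-1}R=t_{R\beta}^{-1}t_\beta$ would make $R$ \emph{commute} with $t_\beta t_{R\beta}^{-1}$ rather than invert it. The correct identity gives $t_{R\beta}t_\beta^{-1}$. Also, $t_\beta$ alone is not level~$2$. The product $t_\beta t_{R\beta}^{-1}$ is, because $[R(\beta)]=[\beta]$ in $H_1(N_g;\Z_2)$, so the two transvections coincide.
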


For $1\le{i}\le{g}$ and $1\le{j}\le{g}$ with $i\neq{j}$, let
$z_{i,j}=
\left\{
\begin{array}{ll}
Y_{\alpha_i,\beta_{i,j}}&(i<j),\\
Y_{\alpha_i,\beta_{j,i}}&(j<i)
\end{array}
\right.$.
Theorem~\ref{gen-2} can be rephrased as follows.

\begin{thm}
For $g\ge9$ odd, $\T_2(N_g)$ is generated by the following $(g-1)^2+\binom{g-1}{3}+1$ involutions.
\begin{itemize}
\item	$Rz_{i,j}z_{m,m+1}$ for $1\le{i}\le{g-1}$ and $1\le{j}\le{g}$ with $i\neq{j}$,
\item	$Rt_{\beta_{1,j,k,l}}t_{R(\beta_{1,j,k,l})}^{-1}$ for $2\le{j<k<l}\le{g}$ and
\item	$R$.
\end{itemize}
For $g\ge10$ even, $\T_2(N_g)$ is generated by the following $(g-1)^2+\binom{g-1}{3}+1$ involutions.
\begin{itemize}
\item	$Rz_{i,j}$ for $1\le{i}\le{g-1}$ and $1\le{j}\le{g}$ with $i\neq{j}$,
\item	$Rz_{n,n+1}t_{\beta_{1,j,k,l}}t_{R(\beta_{1,j,k,l})}^{-1}$ for $2\le{j<k<l}\le{g}$ and
\item	$Rz_{1,2}^{-1}$.
\end{itemize}
\end{thm}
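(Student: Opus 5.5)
We follow the same three-step strategy used for Theorem~\ref{gen-1}, working with the elements $z_{i,j}$ instead of $y_{i,j}$. First, we check that each listed element is an involution lying in $\T_2(N_g)$. The reflection $R$ acts trivially on $H_1(N_g;\Z_2)$, since it fixes every crosscap up to isotopy, so $R\in\M_2(N_g)$. Whether $R$ lies in $\T(N_g)$ depends on the parity of $g$: by the index-two quotient $\M(N_g)/\T(N_g)\cong\Z_2$, which is detected by the determinant of the action on $H_1(N_g;\mathbb{R})$ (Lickorish), $R$ lies in $\T(N_g)$ exactly when $g$ is odd. Each $z_{i,j}$ lies in $\M_2(N_g)\setminus\T_2(N_g)$. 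This explains the parity distinction in the statement:
\begin{itemize}
\item for $g$ odd, the generators involve an even number of crosscap slides together with $R$;
\item for $g$ even, they involve an odd number.
\end{itemize}
For the involution property, we use the picture in Figure~\ref{R}. We choose $\beta_{i,j}$ so that $R(\beta_{i,j})=\beta_{i,j}$ with reversed orientation. Then $Rz_{i,j}R=z_{i,j}^{-1}$, so $(Rz_{i,j})^2=1$. For the odd case we need $z_{i,j}$ and $z_{m,m+1}$ to commute, and both to be reversed by $R$. The case definition of $m$ is exactly designed to make $\beta_{m,m+1}$ disjoint from $\alpha_i$, $\alpha_j$ and $\beta_{i,j}$, or else to take $(m,m+1)=(i,j)$ itself. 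Similarly, $t_{\beta_{1,j,k,l}}t_{R(\beta_{1,j,k,l})}^{-1}$ is conjugated by $R$ to its inverse, provided $\beta_{1,j,k,l}$ and $R(\beta_{1,j,k,l})$ are disjoint. The choice of $n$ guarantees that $z_{n,n+1}$ is supported away from these curves. This is where the hypothesis $g\ge9$ or $g\ge10$ enters: we need room for the index $m$ or $n$.

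Second, we show that the listed elements generate. Let $H$ be the subgroup they generate. Products of two listed involutions give, for $g$ even,
$$Rz_{i,j}\cdot Rz_{1,2}^{-1}=Rz_{i,j}R\cdot z_{1,2}^{-1}=z_{i,j}^{-1}z_{1,2}^{-1}\in H.$$
For $g$ odd, pairing with $R$ gives $z_{i,j}z_{m,m+1}\in H$. We then aim to recover a Reidemeister--Schreier generating set, as in the proof that $\T_2(N_g)=G_3$. Here we use the Hirose--Sato generating set of $\M_2(N_g)$, transported to the curves $\beta_{i,j}$. The $(g-1)^2$ elements $z_{i,j}$ are conjugate to the $y_{i,j}$ by a fixed diffeomorphism, so they together with the $t^2_{\beta_{1,j,k,l}}$ also form a minimal generating set of $\M_2(N_g)$. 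Alternatively, they differ from that set by elements already handled, and an abelianization count shows that $(g-1)^2+\binom{g-1}{3}$ elements suffice. It then suffices to show three things:
\begin{itemize}
\item every $z_{1,2}z_{i,j}^{\pm1}$ lies in $H$;
\item every $t^2_{\beta_{1,j,k,l}}$ lies in $H$;
\item every $z_{1,2}t^2_{\beta_{1,j,k,l}}z_{1,2}^{-1}$ lies in $H$.
\end{itemize}
For the first item, we get $z_{1,2}z_{i,j}^{-1}$-type products directly. To get $z_{1,2}z_{i,j}$ we need the squares $z_{i,j}^2=t_{\delta}$, and we obtain them from the conjugation relation: for instance $(Rz_{i,j})(R)=Rz_{i,j}R=z_{i,j}^{-1}$ in the odd case, combined with $z_{m,m+1}$. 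In the even case, a square arises as $Rz_{i,j}\cdot Rz_{i,j}^{-1}$-type expressions, obtained by conjugating generators by each other.

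Third, and this is the main obstacle, we must extract $t^2_{\beta_{1,j,k,l}}$ from $Rt_{\beta_{1,j,k,l}}t_{R(\beta_{1,j,k,l})}^{-1}$. Write $c=\beta_{1,j,k,l}$. Multiplying by $R$, or by $Rz_{n,n+1}$ in the even case, and using the disjointness built into $n$, gives
$$t_ct_{R(c)}^{-1}\in H \quad\text{(modulo the already-obtained $z$-part).}$$
We first try to show that $t_{R(c)}$ and $t_c^{-1}$ coincide, or that their product can be rewritten via lantern-type and crosscap-slide relations as $t_c^2$ times a product of the $z_{i,j}$-words already in $H$. This amounts to checking in $H_1$ and then on the curve level that $R(c)$ is isotopic to $c$ with the twist direction reversed, or else computing $t_{R(c)}t_c$ explicitly. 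Once the $t_c^2$ are in $H$, the conjugates by $z_{1,2}$ follow because $z_{1,2}$ is expressible via elements of $H$ together with $R$. Finally, a count confirms that there are exactly $(g-1)^2+\binom{g-1}{3}+1$ generators.
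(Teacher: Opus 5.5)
Your framework (Reidemeister--Schreier over an index-two extension, with the parity of $g$ deciding whether $R\in\T_2(N_g)$) is the right one. But the step you call the main obstacle is left open, and it carries the real content of the proof.

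Your first attempt cannot work. The curves $c=\beta_{1,j,k,l}$ and $R(c)$ are disjoint, non-isotopic essential curves; you even use their disjointness in your involution check. So $t_{R(c)}\neq t_c^{-1}$, and $t_ct_{R(c)}^{-1}$ is not $t_c^2$. Appealing to ``lantern-type relations'' is not an argument. The missing idea is the $3$-chain relation. There is an explicit $f=f_{1,j,k,l}$, a product of $z$'s and $y$'s, with $f(c)=\alpha_{1,j,k,l}$, such that $\alpha_{1,j,k,l}$ and $f(R(c))$ are the two boundary curves of a regular neighbourhood of the chain $\alpha_{1,j},\alpha_{j,k},\alpha_{k,l}$. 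Hence
\[
t_{\alpha_{1,j,k,l}}t_{f(R(c))}=(t_{\alpha_{1,j}}t_{\alpha_{j,k}}t_{\alpha_{k,l}})^4 .
\]
The right-hand side can be rewritten as a product of conjugates of the squares $t_{\alpha_{a,b}}^2=y_{b,a}y_{a,b}^{-1}$, so it lies in the subgroup $\mathcal{Y}$ generated by the $y_{i,j}$. The two boundary twists commute, so
$t_{\alpha_{1,j,k,l}}^2=(t_{\alpha_{1,j,k,l}}t_{f(R(c))})\cdot f\,(t_ct_{R(c)}^{-1})\,f^{-1}$.

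To turn this into a statement about your generators you also need $\mathcal{Y}=\mathcal{Z}$, where $\mathcal{Z}$ is the subgroup generated by the $z_{i,j}$; this is the result of Altun\"oz--Monden--Pamuk--Y{\i}ld{\i}z. Your claim that the $z_{i,j}$ are conjugate to the $y_{i,j}$ ``by a fixed diffeomorphism'' is unjustified. Indeed, the paper needs a $(j,k,l)$-dependent $f_{1,j,k,l}$ to move $\beta_{1,j,k,l}$ onto $\alpha_{1,j,k,l}$. Also, an abelianization count only gives a lower bound on the number of generators; it never proves generation.

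With these two ingredients, the paper first proves that the $z_{i,j}$ together with the $t_{\beta_{1,j,k,l}}t_{R(\beta_{1,j,k,l})}^{-1}$ generate $\M_2(N_g)$. Only then does it run Reidemeister--Schreier, with transversal $\{1,Rz_{1,2}\}$ for $g$ odd and $\{1,R\}$ for $g$ even, rather than your $\{1,z_{1,2}\}$. That choice makes the Schreier generators almost literally the listed involutions. The remaining work is a short rewriting using the commuting slides $z_{m,m+1}$ and $z_{n,n+1}$. For $g$ even it also produces $Rz_{i,j}^{-1}$ by conjugating $Rz_{i,j}$ with $Rz_{1,2}^{\pm1}$, $Rz_{3,4}^{\pm1}$ or $Rz_{5,6}^{\pm1}$.

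Your second step is only gestured at. In the odd case, the suggested identity $Rz_{i,j}R=z_{i,j}^{-1}$ is not available inside your subgroup $H$, because $Rz_{i,j}\notin\T_2(N_g)$ when $g$ is odd. There you must work with $Rz_{i,j}z_{m,m+1}$ instead.
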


\begin{rem}
The reflection $R$ is an involution.
We have the following relations.
\begin{eqnarray*}
Rz_{i,j}^{\pm}R&=&z_{i,j}^{\mp1}\\
Rt_{\beta_{1,j,k,l}}t_{R(\beta_{1,j,k,l})}^{-1}R&=&t_{R(\beta_{1,j,k,l})}t_{\beta_{1,j,k,l}}^{-1}=(t_{\beta_{1,j,k,l}}t_{R(\beta_{1,j,k,l})}^{-1})^{-1},\\
z_{i,j}z_{m,m+1}&=&z_{m,m+1}z_{i,j},\\
z_{n,n+1}t_{\beta_{1,j,k,l}}t_{R(\beta_{1,j,k,l})}^{-1}&=&t_{\beta_{1,j,k,l}}t_{R(\beta_{1,j,k,l})}^{-1}z_{n,n+1}.
\end{eqnarray*}
By these relations, we can check that each element in Theorem~\ref{gen-2} is an involution.
\end{rem}

At first,  we show the following.

\begin{lem}
For $g\ge4$, $\M_2(N_g)$ is minimally generated by the following elements.
\begin{itemize}
\item	$z_{i,j}$ for $1\le{i}\le{g-1}$, $1\le{j}\le{g}$ with $i\neq{j}$,
\item	$t_{\beta_{1,j,k,l}}t_{R(\beta_{1,j,k,l})}^{-1}$ for $2\le{j<k<l}\le{g}$.
\end{itemize}
\end{lem}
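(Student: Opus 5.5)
Minimality comes almost for free. By Hirose--Sato~\cite{HS}, $H_1(\M_2(N_g);\Z)\cong\Z_2^{(g-1)^2+\binom{g-1}{3}}$ for $g\ge4$. The list in the lemma has exactly $(g-1)^2+\binom{g-1}{3}$ elements: there are $(g-1)^2$ pairs $(i,j)$ and $\binom{g-1}{3}$ triples $(j,k,l)$. So once we know the list generates, it is minimal, since any generating set has at least $\dim_{\Z_2}H_1(\M_2(N_g);\Z_2)$ elements. The real work is therefore to show that the list generates $\M_2(N_g)$.

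To prove generation, compare with the Hirose--Sato generating set already used in Section~\ref{main-1}, namely the $y_{i,j}$ together with the $t_{\alpha_{1,j,k,l}}^2$. Let $H$ be the subgroup generated by the listed $z_{i,j}$ and $t_{\beta_{1,j,k,l}}t_{R(\beta_{1,j,k,l})}^{-1}$. The plan has three parts.

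\textbf{Crosscap slides.} I will show $y_{i,j}\in H$ for every $i\le g-1$ and $j\ne i$. The curves $\beta_{i,j}$ and $\alpha_{i,j}$ (as I read the figures) both pass once through crosscaps $i$ and $j$. They should differ by going around the intermediate crosscaps on the other side. So $\alpha_{i,j}$ is the image of $\beta_{i,j}$ under a product of crosscap slides of the type $z_{k,\cdot}$ that moves crosscaps $k$ with $i<k<j$ across the arc. This gives $y_{i,j}=w\,z_{i,j}w^{-1}$, or $z_{i,j}$ times a correction, with $w$ a word in the $z$'s. I would do this by induction on $|j-i|$, using the conjugation formula $fY_{\mu,\alpha}f^{-1}=Y_{f(\mu),f(\alpha)}$. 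Where a conjugating element is a Dehn twist squared, I would replace it using $Y_{\mu_2,\alpha}^{-1}Y_{\mu_1,\alpha}=t_\alpha^2$.

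\textbf{Squares of twists.} Next I will show $t_{\alpha_{1,j,k,l}}^2\in H$. The curves $\beta_{1,j,k,l}$ and $R(\beta_{1,j,k,l})$ should be $\Z_2$-homologous: $R$ acts trivially on $H_1(N_g;\Z_2)$ for the pictured reflection, which is why $R\in\M_2$ is relevant. Hence $t_{\beta_{1,j,k,l}}t_{R(\beta_{1,j,k,l})}^{-1}$ lies in $\M_2(N_g)$. I would express $t_{\beta_{1,j,k,l}}t_{R(\beta_{1,j,k,l})}^{-1}$ as $t_{\alpha_{1,j,k,l}}^2$ times a product of crosscap slides $y$'s and their conjugates, which now lie in $H$ by the first step. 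Concretely, $R(\beta_{1,j,k,l})$ is obtained from $\beta_{1,j,k,l}$ by sliding the crosscaps it passes through, and a lantern-type or crosscap-slide relation (like $Y_{\mu,\beta}Y_{\mu,\alpha}=t_{\delta_1}t_{\delta_2}$) converts $t_\beta t_{R\beta}^{-1}$ into $t_{\alpha_{1,j,k,l}}^{\pm2}$ modulo products of $y$'s.

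\textbf{Abelianization check.} As a fallback that makes the second step cleaner, I would argue in $H_1(\M_2(N_g);\Z_2)$ via the explicit Hirose--Sato basis. I would show the images of the new elements form a basis, reducing the problem to verifying that each $t_{\beta_{1,j,k,l}}t_{R\beta_{1,j,k,l}}^{-1}$ projects to the basis vector of $t^2_{\alpha_{1,j,k,l}}$ plus $y$-terms. This alone does not give generation, since $\M_2$ is not abelian. So I still need the explicit word from the second step, or an argument that the commutator subgroup is contained in $H$.

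The main obstacle is the second step: finding a concrete expression of $t_{\beta_{1,j,k,l}}t_{R(\beta_{1,j,k,l})}^{-1}$ in terms of $t_{\alpha_{1,j,k,l}}^2$ and crosscap slides. This requires careful picture chasing with the figures of $\beta_{1,j,k,l}$ and $R$. I would first try the case $(j,k,l)=(2,3,4)$ explicitly, then move to the general case by conjugating with slides that carry crosscaps $2,3,4$ to $j,k,l$.
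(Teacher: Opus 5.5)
Your minimality argument is correct, and it is exactly what the paper leaves implicit: the list has $(g-1)^2+\binom{g-1}{3}$ elements, which is the rank of $H_1(\M_2(N_g);\Z)$. Your reduction to showing that the Hirose--Sato generators $y_{i,j}$ and $t_{\alpha_{1,j,k,l}}^2$ lie in $H$ is also the paper's. Your first step is precisely the theorem $\mathcal{Y}=\mathcal{Z}$ of Altun\"oz--Monden--Pamuk--Y{\i}ld{\i}z, which the paper simply cites. You can cite it too, rather than rely on the figure-dependent induction you sketch.

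\textbf{The gap is the second step.} It is the only real content of the lemma, and you leave it open: ``a lantern-type or crosscap-slide relation'' does not supply a mechanism. The missing idea is to normalize first and then use the $3$-chain relation.

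Take $f=f_{1,j,k,l}\in H$ with $f(\beta_{1,j,k,l})=\alpha_{1,j,k,l}$. It is a product of slides of the crosscaps $m$ with $1<m<l$, $m\ne j,k$; the paper uses $z_{m,l}$, $y_{m,k}^{-1}$, $y_{m,j}^{-1}$. Then $\gamma=f(R(\beta_{1,j,k,l}))$ and $\alpha_{1,j,k,l}$ are the two boundary curves of a regular neighborhood of the chain $\alpha_{1,j},\alpha_{j,k},\alpha_{k,l}$. Hence
$$t_{\alpha_{1,j,k,l}}t_\gamma=(t_{\alpha_{1,j}}t_{\alpha_{j,k}}t_{\alpha_{k,l}})^4 .$$
Write this full twist as a product of six squares of twists (the pure-braid factorization). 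Each factor is then a word in the $y$'s, for example $t_{\alpha_{j,k}}^{-1}t_{\alpha_{1,j}}^2t_{\alpha_{j,k}}=y_{j,k}^{-1}(y_{k,1}y_{1,k}^{-1})y_{j,k}$. This uses $t_{\alpha_{i,j}}^2=y_{j,i}y_{i,j}^{-1}$ and, when $l=g$, Szepietowski's fact that $y_{g,j}\in\mathcal{Y}$. Since $t_{\alpha_{1,j,k,l}}$ and $t_\gamma$ commute,
\[
t_{\alpha_{1,j,k,l}}^2=(t_{\alpha_{1,j,k,l}}t_\gamma)\cdot f\,t_{\beta_{1,j,k,l}}t_{R(\beta_{1,j,k,l})}^{-1}f^{-1}\in H.
\]

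Your plan to do $(j,k,l)=(2,3,4)$ and then conjugate ``with slides that carry crosscaps $2,3,4$ to $j,k,l$'' cannot work.
\begin{itemize}
\item Elements of $H\subset\M_2(N_g)$ act trivially on $H_1(N_g;\Z_2)$. So they cannot carry a curve through crosscaps $1,2,3,4$ to one through $1,j,k,l$.
\item More decisively, the $t_{\alpha_{1,j,k,l}}^2$ are distinct basis elements of $H_1(\M_2(N_g);\Z)$, hence pairwise non-conjugate in $\M_2(N_g)$.
\item Conjugating instead by a crosscap permutation outside $\M_2(N_g)$ is not known to preserve $H$; that would presuppose the lemma.
\end{itemize}
So the argument has to be uniform in $(j,k,l)$, as the paper's is.
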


\begin{proof}
Let $G$ be the group generated by the elements in the lemma.
Remember the notations $y_{i,j}$'s in Section~\ref{main-2} and that $\M_2(N_g)$ is minimally generated by
\begin{itemize}
\item	$y_{i,j}$ for $1\le{i}\le{g-1}$, $1\le{j}\le{g}$ with $i\neq{j}$,
\item	$t_{\alpha_{1,j,k,l}}^2$ for $2\le{j<k<l}\le{g}$
\end{itemize}
(see~\cite{HS}).
We show that these generators are in $G$.

Let $\mathcal{Y}$ (resp. $\mathcal{Z}$) be the subgroup of $\M_2(N_g)$ generated by $y_{i,j}$ (resp. $z_{i,j}$) for $1\le{i}\le{g-1}$ and $1\le{j}\le{g}$ with $i\neq{j}$.
Altun\"oz-Monden-Pamuk-Y{\i}ld{\i}z~\cite{AMPY} showed $\mathcal{Y}=\mathcal{Z}$.
Hence we check that $t_{\alpha_{1,j,k,l}}^2$ is in $G$ for $2\le{j<k<l}\le{g}$.

For $2\le{j<k<l}\le{g}$, let
$$f_{1,j,k,l}=(z_{2,l}y_{2,k}^{-1}y_{2,j}^{-1}\cdots{}z_{j-1,l}y_{j-1,k}^{-1}y_{j-1,j}^{-1})(z_{j+1,l}y_{j+1,k}^{-1}\cdots{}z_{k-1,l}y_{k-1,k}^{-1})(z_{k+1,l}\cdots{}z_{l-1,l}).$$
Note that $f_{1,j,k,l}$ is in $G$ and that $f_{1,j,k,l}(\beta_{1,j,k,l})=\alpha_{1,j,k,l}$ and $f_{1,j,k,l}(R(\beta_{1,j,k,l}))$ is as shown in Figure~\ref{fR}.
By basic relations on Dehn twists, we have
\begin{eqnarray*}
t_{\alpha_{1,j,k,j}}t_{f_{1,j,k,l}(R(\beta_{1,j,k,l}))}
&=&(t_{\alpha_{1,j}}t_{\alpha_{j,k}}t_{\alpha_{k,l}})^4\\
&=&t_{\alpha_{1,j}}^2\cdot{}t_{\alpha_{j,k}}^2\cdot{}t_{\alpha_{j,k}}^{-1}t_{\alpha_{1,j}}^2t_{\alpha_{j,k}}\cdot{}t_{\alpha_{k,l}}^2\cdot{}t_{\alpha_{k,l}}^{-1}t_{\alpha_{j,k}}^2t_{\alpha_{k,l}}\cdot{}t_{\alpha_{k,l}}^{-1}t_{\alpha_{j,k}}^{-1}t_{\alpha_{1,j}}^2t_{\alpha_{j,k}}t_{\alpha_{k,l}}\\
&=&y_{j,1}y_{1,j}^{-1}\cdot{}y_{k,j}y_{j,k}^{-1}\cdot{}y_{j,k}^{-1}(y_{k,1}y_{1,k}^{-1})y_{j,k}\cdot{}y_{l,k}y_{k,l}^{-1}\\
&&\cdot{}y_{k,l}^{-1}(y_{l,j}y_{j,l}^{-1})y_{k,l}\cdot{}y_{k,l}^{-1}y_{j,l}^{-1}(y_{l,1}y_{1,l}^{-1})y_{j,l}y_{k,l}
\end{eqnarray*}
(see Figure~\ref{chain}).
Note that $y_{g,j}$ is in $\mathcal{Y}$ (see~\cite{Sz2}).
Hence $t_{\alpha_{1,j,k,j}}t_{f_{1,j,k,l}(R(\beta_{1,j,k,l}))}$ is in $\mathcal{Y}=\mathcal{Z}\subset{G}$.
Therefore we have
\begin{eqnarray*}
t_{\alpha_{1,j,k,l}}^2
&=&t_{\alpha_{1,j,k,j}}t_{f_{1,j,k,l}(R(\beta_{1,j,k,l}))}\cdot{}t_{\alpha_{1,j,k,j}}t_{f_{1,j,k,l}(R(\beta_{1,j,k,l}))}^{-1}\\
&=&t_{\alpha_{1,j,k,j}}t_{f_{1,j,k,l}(R(\beta_{1,j,k,l}))}\cdot{}f_{1,j,k,l}(t_{\beta_{1,j,k,l}}t_{R(\beta_{1,j,k,l})}^{-1})f_{1,j,k,l}^{-1}\in{G}.
\end{eqnarray*}

\begin{figure}[htbp]
\includegraphics{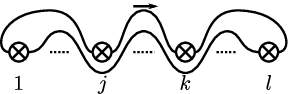}
\caption{A simple closed curve $f_{1,j,k,l}(R(\beta_{1,j,k,l}))$.}\label{fR}
\end{figure}

\begin{figure}[htbp]
\includegraphics{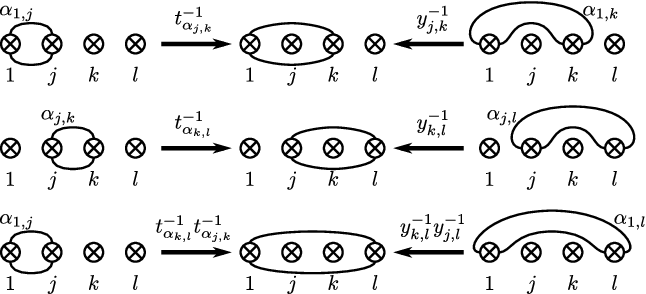}
\caption{$t_{\alpha_{j,k}}^{-1}t_{\alpha_{1,j}}^2t_{\alpha_{j,k}}=y_{j,k}^{-1}(y_{k,1}y_{1,k}^{-1})y_{j,k}$, $t_{\alpha_{k,l}}^{-1}t_{\alpha_{j,k}}^2t_{\alpha_{k,l}}=y_{k,l}^{-1}(y_{l,j}y_{j,l}^{-1})y_{k,l}$, $t_{\alpha_{k,l}}^{-1}t_{\alpha_{j,k}}^{-1}t_{\alpha_{1,j}}^2t_{\alpha_{j,k}}t_{\alpha_{k,l}}=y_{k,l}^{-1}y_{j,l}^{-1}(y_{l,1}y_{1,l}^{-1})y_{j,l}y_{k,l}$}\label{chain}
\end{figure}

Thus we finish the proof.
\end{proof}

For this generating set, we apply the Reidemeister-Schreier method.
Note that the reflection $R$ is describe as $R=z_{1,g}z_{2,g}\cdots{}z_{g-1,g}$ (see \cite{AMPY}).
Hence $R\in\T_2(N_g)$ (resp. $R\notin\T_2(N_g)$) for odd $g$ (resp. even $g$).
Remember that $\T(N_g)$ (resp. $\T_2(N_g)$) is an index two subgroup of $\M(N_g)$ (resp. $\M_2(N_g)$) (see \cite{Li2} (resp. \cite{KO})).
The quotient $\M(N_g)/\T(N_g)$ and $\M_2(N_g)/\T_2(N_g)$ can be generated by $Rz_{1,2}$ (resp. $R$) for odd $g$ (resp. even $g$).
We can take a Schreier transversal $U_o=\{1,Rz_{1,2}\}$ (resp. $U_e=\{1,R\}$) for $\T_2(N_g)$ in $\M_2(N_g)$, for odd $g$ (resp. even $g$).
For $f\in\M_2(N_g)$, we denote by $\overline{f}$ the element of $U$ corresponding to the image of $f$ in $\M_2(N_g)/\T_2(N_g)$.
Let $G$ be the generating set for $\M_2(N_g)$ in the above lemma.

First, we consider the case where $g\ge9$ is odd.
By the Reidemeister-Schreier method,  $\T_2(N_g)$ is generated by
$$\{uf^{\pm1}\overline{uf^{\pm1}}^{-1}\mid{}u\in{U_o},f\in{G},uf^{\pm1}\neq\overline{uf^{\pm1}}\}.$$
We see
\begin{eqnarray*}
1z_{i,j}^{\pm1}\overline{1z_{i,j}^{\pm1}}^{-1}&=&z_{i,j}^{\pm1}(Rz_{1,2})^{-1}=(Rz_{1,2}z_{i,j}^{\mp1})^{-1},\\
Rz_{1,2}z_{i,j}^{\pm1}\overline{Rz_{1,2}z_{i,j}^{\pm1}}^{-1}&=&Rz_{1,2}z_{i,j}^{\pm1},\\
1(t_{\beta_{1,j,k,l}}t_{R(\beta_{1,j,k,l})}^{-1})^{\pm1}\overline{1(t_{\beta_{1,j,k,l}}t_{R(\beta_{1,j,k,l})}^{-1})^{\pm1}}^{-1}&=&(t_{\beta_{1,j,k,l}}t_{R(\beta_{1,j,k,l})}^{-1})^{\pm1},\\
Rz_{1,2}(t_{\beta_{1,j,k,l}}t_{R(\beta_{1,j,k,l})}^{-1})^{\pm1}\overline{Rz_{1,2}(t_{\beta_{1,j,k,l}}t_{R(\beta_{1,j,k,l})}^{-1})^{\pm1}}^{-1}&=&(Rz_{1,2}t_{\beta_{1,j,k,l}}t_{R(\beta_{1,j,k,l})}^{-1}Rz_{1,2})^{\pm1}.
\end{eqnarray*}
Note that, by the assumption $g\ge9$,
\begin{eqnarray*}
Rz_{1,2}z_{i,j}^{-1}&=&Rz_{1,2}^2\cdot{}z_{1,2}^{-1}z_{i,j}^{-1}=Rz_{1,2}^2\cdot{}Rz_{1,2}z_{i,j}\cdot{}R,\\
Rz_{1,2}t_{\beta_{1,j,k,l}}t_{R(\beta_{1,j,k,l})}^{-1}Rz_{1,2}&=&Rz_{1,2}z_{n,n+1}\cdot{}t_{\beta_{1,j,k,l}}t_{R(\beta_{1,j,k,l})}^{-1}\cdot{}Rz_{1,2}z_{n,n+1}.
\end{eqnarray*}
Hence $\T_2(N_g)$ is generated by
\begin{itemize}
\item	$Rz_{1,2}z_{i,j}$ for $1\le{i}\le{g-1}$, $1\le{j}\le{g}$ with $i\neq{j}$,
\item	$t_{\beta_{1,j,k,l}}t_{R(\beta_{1,j,k,l})}^{-1}$ for $2\le{j<k<l}\le{g}$ and
\item	$R$.
\end{itemize}
Note that the generator $t_{\beta_{1,j,k,l}}t_{R(\beta_{1,j,k,l})}^{-1}$ is not an involution and the generator $Rz_{1,2}z_{i,j}$ is not necessarily an involution.
When $m=1$, we see
\begin{eqnarray*}
Rz_{1,2}z_{i,j}=Rz_{i,j}z_{1,2}=Rz_{i,j}z_{m,m+1}\in{G}.
\end{eqnarray*}
When $m\ge3$, we see
\begin{eqnarray*}
Rz_{1,2}z_{i,j}=Rz_{1,2}^2\cdot{}Rz_{m,m+1}z_{1,2}\cdot{}Rz_{i,j}z_{m,m+1}\in{G}.
\end{eqnarray*}
When $m=2$, we see
\begin{eqnarray*}
Rz_{1,2}z_{i,j}=Rz_{4,5}z_{1,2}\cdot{}Rz_{m,m+1}z_{4,5}\cdot{}Rz_{i,j}z_{m,m+1}\in{G}.
\end{eqnarray*}
Finally, we see
\begin{eqnarray*}
t_{\beta_{1,j,k,l}}t_{R(\beta_{1,j,k,l})}^{-1}=R\cdot{}Rt_{\beta_{1,j,k,l}}t_{R(\beta_{1,j,k,l})}^{-1}\in{G}.
\end{eqnarray*}
Therefore $\M_2(N_g)=G$ folds.

Next, we consider the case where $g\ge10$ is even.
By the Reidemeister-Schreier method,  $\T_2(N_g)$ is generated by
$$\{uf^{\pm1}\overline{uf^{\pm1}}^{-1}\mid{}u\in{U_e},f\in{G},uf^{\pm1}\neq\overline{uf^{\pm1}}\}.$$
We see
\begin{eqnarray*}
1z_{i,j}^{\pm1}\overline{1z_{i,j}^{\pm1}}^{-1}&=&z_{i,j}^{\pm1}R^{-1}=(Rz_{i,j}^{\mp1})^{-1},\\
Rz_{i,j}^{\pm1}\overline{Rz_{i,j}^{\pm1}}^{-1}&=&Rz_{i,j}^{\pm1},\\
1(t_{\beta_{1,j,k,l}}t_{R(\beta_{1,j,k,l})}^{-1})^{\pm1}\overline{1(t_{\beta_{1,j,k,l}}t_{R(\beta_{1,j,k,l})}^{-1})^{\pm1}}^{-1}&=&(t_{\beta_{1,j,k,l}}t_{R(\beta_{1,j,k,l})}^{-1})^{\pm1},\\
R(t_{\beta_{1,j,k,l}}t_{R(\beta_{1,j,k,l})}^{-1})^{\pm1}\overline{R(t_{\beta_{1,j,k,l}}t_{R(\beta_{1,j,k,l})}^{-1})^{\pm1}}^{-1}&=&(Rt_{\beta_{1,j,k,l}}t_{R(\beta_{1,j,k,l})}^{-1}R^{-1})^{\pm1}.
\end{eqnarray*}
Note that
$$Rt_{\beta_{1,j,k,l}}t_{R(\beta_{1,j,k,l})}^{-1}R^{-1}
=t_{R(\beta_{1,j,k,l})}t_{\beta_{1,j,k,l}}^{-1}
=(t_{\beta_{1,j,k,l}}t_{R(\beta_{1,j,k,l})}^{-1})^{-1}.$$
Hence $\T_2(N_g)$ is generated by
\begin{itemize}
\item	$Rz_{i,j}^{\pm1}$ for $1\le{i}\le{g-1}$, $1\le{j}\le{g}$ with $i\neq{j}$,
\item	$t_{\beta_{1,j,k,l}}t_{R(\beta_{1,j,k,l})}^{-1}$ for $2\le{j<k<l}\le{g}$.
\end{itemize}
Note that the generator $t_{\beta_{1,j,k,l}}t_{R(\beta_{1,j,k,l})}^{-1}$ is not involution.
When $i$, $j\ge3$, we see
\begin{eqnarray*}
Rz_{i,j}^{-1}=z_{i,j}R\cdot{}z_{1,2}^{-1}z_{1,2}=z_{1,2}z_{i,j}Rz_{1,2}=Rz_{1,2}^{-1}\cdot{}Rz_{i,j}\cdot{}Rz_{1,2}\in{G}.
\end{eqnarray*}
When $\min\{i,j\}\le2$ and $\max\{i,j\}\ge5$, we see
\begin{eqnarray*}
Rz_{i,j}^{-1}=z_{i,j}R\cdot{}z_{3,4}^{-1}z_{3,4}=z_{3,4}z_{i,j}Rz_{3,4}=Rz_{3,4}^{-1}\cdot{}Rz_{i,j}\cdot{}Rz_{3,4}\in{G}.
\end{eqnarray*}
When $\min\{i,j\}\le2$ and $\max\{i,j\}\le4$, we see
\begin{eqnarray*}
Rz_{i,j}^{-1}=z_{i,j}R\cdot{}z_{5,6}^{-1}z_{5,6}=z_{5,6}z_{i,j}Rz_{5,6}=Rz_{5,6}^{-1}\cdot{}Rz_{i,j}\cdot{}Rz_{5,6}\in{G}.
\end{eqnarray*}
Finally, we see
\begin{eqnarray*}
t_{\beta_{1,j,k,l}}t_{R(\beta_{1,j,k,l})}^{-1}=Rz_{n,n+1}\cdot{}Rz_{n,n+1}t_{\beta_{1,j,k,l}}t_{R(\beta_{1,j,k,l})}^{-1}\in{G}.
\end{eqnarray*}
Therefore $\M_2(N_g)=G$ folds.

Thus, we complete the proof of Theorem~\ref{gen-2}.

\appendix
\section{}

For a finitely generated group $G$, let $m(G)$ be the minimal number of generators for $G$ and, if $G$ can be generated by involutions, $im(G)$ the minimal number of involution generators for $G$.
By~\cite{Sz1,HS,AMPY}, we have
$$m(\M_2(N_g))=im(\M_2(N_g))=(g-1)^2+\binom{g-1}{3},~~m(\M_2(N_3))=4$$
for $g\ge4$.
By~\cite{KO} and our results, we have
$$(g-1)^2+\binom{g-1}{3}-1\leq{m(\T_2(N_g))}\le(g-1)^2+\binom{g-1}{3}+1,$$
$$m(\T_2(N_4))\le11,~~2\le{m(\T_2(N_3))}\le3$$
for $g\ge5$ and
$$(g-1)^2+\binom{g-1}{3}-1\leq{im(\T_2(N_g))}\le(g-1)^2+\binom{g-1}{3}+1$$
for $g\ge9$.

We have the following natural problem.

\begin{prob}
Determine the numbers $m(\T_2(N_g))$ and $im(\T_2(N_g))$ for $g\ge4$.
\end{prob}

For $g\geq5$, we see
\begin{eqnarray*}
t_{\delta(\alpha_1,\alpha_{1,g})}&=&y_{1,g}^2\\
&=&y_{1,g}\cdot(y_{g,2}y_{1,2})y_{1,g}^{-1}(y_{g,2}y_{1,2})^{-1}\cdot{}y_{3,4}y_{3,4}^{-1}\\
&=&y_{1,g}y_{3,4}\cdot{}y_{g,2}y_{1,2}\cdot(y_{1,g}y_{3,4})^{-1}\cdot(y_{g,2}y_{1,2})^{-1}
\end{eqnarray*}
(see Figure~\ref{y_{1,g}}).
Hence $t_{\delta(\alpha_1,\alpha_{1,g})}$, similarly $t_{\delta(\alpha_{g-1},\alpha_{g-1,g})}$, are in the commutator subgroup of $\T_2(N_g)$, and so in the kernel of the natural homomorphism $\T_2(N_g)\to{}H_1(\T_2(N_g),\Z)\cong\Z_2^{(g-1)^2+\binom{g-1}{3}-1}$.
Therefore if $\T_2(N_g)$ can be generated by $(g-1)^2+\binom{g-1}{3}-1$ elements of the generating set in Theorem~\ref{gen-1}, then generators $t_{\delta(\alpha_1,\alpha_{1,g})}$ and $t_{\delta(\alpha_{g-1},\alpha_{g-1,g})}$ are not needed.

\begin{figure}[htbp]
\includegraphics{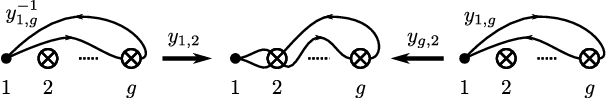}
\caption{$y_{1,g}=(y_{g,2}y_{1,2})y_{1,g}^{-1}(y_{g,2}y_{1,2})^{-1}$}\label{y_{1,g}}
\end{figure}


\end{document}